\pgfplotsset{compat=1.15}
\newenvironment{enumerate*}
  {\begin{enumerate}[(I)]
    \setlength{\itemsep}{10pt}
    \setlength{\parskip}{0pt}}
  {\end{enumerate}}
\newtheorem{theorem}{Theorem}[section]
\newtheorem{corollary}[theorem]{Corollary}
\newtheorem{conjecture}[theorem]{Conjecture}
\newtheorem{lemma}[theorem]{Lemma}
\theoremstyle{definition}
\title[Hermite--Hadamard inequalities for nearly-spherical domains]{Hermite--Hadamard inequalities\\ for nearly-spherical domains}
\author[]{Noah Kravitz}
\address[]{Department of Mathematics, Princeton University, Princeton, NJ 08540, USA}
\email{nkravitz@princeton.edu}
\author[]{Mitchell Lee}
\address[]{Unaffiliated, Cambridge, MA 02140, USA}
\email{trivial171@gmail.com}
\begin{document}
\maketitle

\begin{abstract}
A conjecture of Pasteczka, generalizing the classical Hermite--Hadamard Inequality, states that if $\Omega \subseteq \mathbb{R}^d$ is a compact convex domain such that $\Omega$ and $\partial \Omega$ have the same center of mass, then for every convex function $f: \Omega \to \mathbb{R}^d$, the average value of $f$ on $\Omega$ is less than or equal to the average value of $f$ on $\partial \Omega$. Pasteczka proved this conjecture for the case where $\Omega$ is a polytope with an inscribed ball. We generalize this result by proving Pasteczka's conjecture in the case where some point lies at most $(d+1)|\Omega|/|\partial \Omega|$ away from all hyperplanes tangent to $\partial \Omega$.
\end{abstract}

\section{Introduction}
The well-known Hermite--Hadamard Inequality \cite{hermite, hadamard} states that if $f: [a,b] \to \mathbb{R}$ is a convex function, then
$$\frac{1}{b-a}\int_a^b f(x) \,dx \leq \frac{1}{2}(f(a)+f(b))$$
and equality holds if and only if $f$ is affine-linear. Intuitively, a convex function assumes a larger average value on the boundary of $[a,b]$ than on the interior because it ``curves up'' closer to the boundary. Our goal is to generalize this inequality to higher dimensions.

As a first pass, let $\Omega \subseteq \mathbb{R}^d$ be a compact convex domain (our higher-dimensional analogue of the interval $[a,b]$).  We say that $\Omega$ is a \emph{Jensen domain} if 
\begin{equation}\label{eq:main}
\frac{1}{|\Omega|} \int_\Omega f \,dV \leq \frac{1}{|\partial \Omega|} \int_{\partial \Omega}f \,d\sigma
\end{equation}
for every convex function $f: \Omega \to \mathbb{R}$.  One might hope that every compact convex domain is Jensen, but this hope is too optimistic.  Indeed, Pasteczka \cite{pasteczka} noticed that if the center of mass of $\Omega$ does not coincide with the center of mass of $\partial \Omega$, then some linear function $f$ provides a counterexample to \eqref{eq:main}: When $f$ is linear, the left- and right-hand sides of \eqref{eq:main} are the values of $f$ at the centers of mass of $\Omega$ and $\partial \Omega$, respectively, and $f$ can be chosen so that the former is larger than the latter.

With this example in mind, we say that a compact convex domain $\Omega \subseteq \mathbb{R}^d$ is a \emph{Jensen candidate} if $\Omega$ and $\partial \Omega$ have the same center of mass (equivalently, if \eqref{eq:main} holds for all affine-linear functions $f$).  In $1$ dimension, every compact convex set is an interval and hence a Jensen candidate, so, in hindsight, it is not surprising that this condition should be an ingredient in a higher-dimensional generalization of Hermite--Hadamard.  Pasteczka conjectured that being a Jensen candidate is not only necessary but also sufficient for being a Jensen domain.

\begin{conjecture}[Pasteczka \cite{pasteczka}]
Every Jensen candidate is also a Jensen domain.
\end{conjecture}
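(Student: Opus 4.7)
The plan is to use polar coordinates around a well-chosen basepoint $p \in \Omega$ to compare $\int_\Omega f\,dV$ with $\int_{\partial\Omega} f\,d\sigma$ ray by ray.

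First I would reduce to the case $f\geq 0$ with $f(p)=0$. Given any affine-linear $\ell$, replacing $f$ by $f-\ell$ changes the two sides of \eqref{eq:main} by the same amount precisely because $\Omega$ is a Jensen candidate, so I may subtract from $f$ an affine support at $p$ without loss of generality.

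Next I would parametrize $\Omega$ by rays from $p$: for $\omega \in S^{d-1}$ let $r(\omega)$ denote the distance from $p$ to $\partial\Omega$ in direction $\omega$ and set $q(\omega)=p+r(\omega)\omega$. Convexity of $f$ together with $f(p)=0$ gives the ray-wise bound $f(p+s\omega)\leq (s/r(\omega))\,f(q(\omega))$, from which
\[
\int_0^{r(\omega)} f(p+s\omega)\,s^{d-1}\,ds \leq \frac{r(\omega)^d}{d+1}\, f(q(\omega)).
\]
Writing $h_p(x) = \langle x-p, \nu(x)\rangle$ for the distance from $p$ to the tangent hyperplane at a boundary point $x$, the standard polar-coordinate formula on $\partial\Omega$ reads $d\sigma = r(\omega)^d/h_p(q(\omega))\,d\omega$. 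Integrating the ray-wise inequality over $\omega$ and applying this change of variables yields the central estimate
\[
\int_\Omega f \,dV \leq \frac{1}{d+1}\int_{\partial\Omega} f\, h_p \,d\sigma,
\]
which holds for \emph{any} interior basepoint $p$.

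Finally I would invoke the quantitative hypothesis that there exists $p$ with $h_p(x) \leq (d+1)|\Omega|/|\partial\Omega|$ for every $x \in \partial\Omega$. Since $f\geq 0$ under our normalization, the displayed inequality gives $\int_\Omega f\,dV \leq (|\Omega|/|\partial\Omega|) \int_{\partial\Omega} f\,d\sigma$, which is exactly \eqref{eq:main}. The main obstacle — and the reason this plan proves only a special case of Pasteczka's full conjecture — is that the pointwise bound on $h_p$ is restrictive: for an elongated Jensen candidate no basepoint need satisfy it. Closing the gap in full generality would seem to require either a sharper convexity estimate along each ray (the bound $f(p+s\omega) \leq (s/r)f(q)$ is wasteful when $f$ grows sublinearly near $\partial \Omega$) or an integrated version of the hypothesis in which $h_p$ is allowed to be large precisely where $f$ is small.
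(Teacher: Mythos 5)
There is a genuine gap, though you have correctly diagnosed it yourself: what you prove is not the stated conjecture but the paper's Theorem~\ref{thm:main}. The conjecture's only hypothesis is that $\Omega$ is a Jensen candidate; nothing guarantees the existence of a basepoint $p$ with $h_p(x) \leq (d+1)|\Omega|/|\partial\Omega|$ for all $x \in \partial\Omega$, so the ``quantitative hypothesis'' you invoke in the last step is simply not available. Up to that point your argument coincides with the paper's: subtracting an affine support at the basepoint (legitimate exactly because $\Omega$ is a Jensen candidate, so both sides of \eqref{eq:main} shift equally) is the reduction in the proof of Theorem~\ref{thm:main}, and your ray-wise bound $f(p+s\omega) \leq (s/r(\omega))f(q(\omega))$ combined with the change of variables $d\sigma = r(\omega)^d/h_p(q(\omega))\,d\omega$ is precisely Lemma~\ref{lem:polar} (the paper phrases it in $(x,t)$-coordinates with volume element $t^{d-1}h(\Omega,x)\,d\sigma(x)\,dt$, which is the same computation, and your intermediate estimate $\int_\Omega f\,dV \leq \frac{1}{d+1}\int_{\partial\Omega} f\,h_p\,d\sigma$ is a marginally sharper form of it, since the paper immediately bounds $h_p$ by $h_{\max}$).

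More importantly, the gap cannot be closed by a sharper ray-wise estimate or an integrated hypothesis: the conjecture is false for $d \geq 3$. Nazarov's counterexample \cite{fedja}, cited in the introduction, is exactly a very long, skinny Jensen candidate that is not a Jensen domain --- so your worry that ``for an elongated Jensen candidate no basepoint need satisfy'' the pointwise bound is not merely a technical limitation of the method but reflects a genuine failure of the statement. (In two dimensions the conjecture does hold, also by Nazarov, but the proof requires ideas beyond the polar-coordinate comparison used here.) So the honest conclusion is that your argument establishes the special case $h_{\max}(\Omega) \leq (d+1)|\Omega|/|\partial\Omega|$, i.e.\ Theorem~\ref{thm:main}, and no proof of the conjecture as stated exists.
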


As Pasteczka observed in his original paper \cite{pasteczka}, it is easy to see that balls are Jensen domains, and it follows quickly from the $1$-dimensional Hermite--Hadamard inequality that parallelopipeds are Jensen domains.  Pasteczka also proved his conjecture for convex polytopes with inscribed balls.

\begin{theorem}[Pastezcka \cite{pasteczka}]\label{thm:pasteczka}
Let $\Omega \subseteq \mathbb{R}^d$ ($d \geq 2$) be a convex polytope with an inscribed $d$-ball. If $\Omega$ is a Jensen candidate, then it is a Jensen domain.
\end{theorem}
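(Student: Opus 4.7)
The plan is to use the inscribed ball to decompose $\Omega$ into cones over the facets, apply pointwise convexity radially, and then use the Jensen-candidate hypothesis to show that the incenter coincides with the common center of mass, so that ordinary (finite-dimensional) Jensen closes the argument. Let $p$ be the center of the inscribed ball and $r$ its radius. Each facet $F$ of $\partial\Omega$ is tangent to the ball, hence lies in an affine hyperplane at distance $r$ from $p$, so $\Omega$ is the (essentially disjoint) union over facets $F$ of the pyramids $P_F := \{p + t(x-p) : x \in F,\ t \in [0,1]\}$. A standard Jacobian computation shows $|P_F| = r|F|/d$ and in particular $|\Omega| = r|\partial\Omega|/d$.

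Next, I would bound $\int_{P_F} f\,dV$ by parametrizing $P_F$ via $(x,t) \in F \times [0,1]$, which introduces a factor $r t^{d-1}$ in the volume form. For fixed $x \in F$ the map $t \mapsto f(p + t(x-p))$ is convex on $[0,1]$, so it is dominated by its linear interpolant $(1-t)f(p) + t f(x)$. Using $\int_0^1 (1-t) t^{d-1}\,dt = \frac{1}{d(d+1)}$ and $\int_0^1 t \cdot t^{d-1}\,dt = \frac{1}{d+1}$, summation over $F$ yields
\begin{equation*}
\int_\Omega f \,dV \;\leq\; \frac{r\,|\partial\Omega|}{d(d+1)}\, f(p) \;+\; \frac{r}{d+1}\int_{\partial\Omega} f\,d\sigma \;=\; \frac{|\Omega|}{d+1}\, f(p) \;+\; \frac{d\,|\Omega|}{(d+1)\,|\partial\Omega|} \int_{\partial\Omega} f\,d\sigma.
\end{equation*}
Dividing by $|\Omega|$, the desired inequality \eqref{eq:main} reduces to $f(p) \leq \frac{1}{|\partial\Omega|}\int_{\partial\Omega} f\,d\sigma$.

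The main thing to verify is therefore the identification of $p$ with the center of mass of $\partial\Omega$, which is where the Jensen-candidate hypothesis enters. The centroid of each pyramid $P_F$ is $\frac{p + d\bar F}{d+1}$, where $\bar F$ is the centroid of $F$. Weighting by $|P_F| = r|F|/d$ and summing, I would obtain the identity
\begin{equation*}
\bar p_\Omega \;=\; \frac{1}{d+1}\bigl(p + d\,\bar p_{\partial\Omega}\bigr),
\end{equation*}
which uses only $|\Omega| = r|\partial\Omega|/d$ and the definition of $\bar p_{\partial\Omega}$ as the $|F|$-weighted average of the $\bar F$. Setting $\bar p_\Omega = \bar p_{\partial\Omega}$ (the Jensen-candidate assumption) forces $p = \bar p_{\partial\Omega}$.

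With $p$ now the centroid of $\partial\Omega$, the classical Jensen inequality for the convex function $f$ against the probability measure $d\sigma/|\partial\Omega|$ gives $f(p) \leq \frac{1}{|\partial\Omega|}\int_{\partial\Omega} f\,d\sigma$, which is exactly what the radial estimate needs. I expect the trickiest bookkeeping to be the centroid identity in the third paragraph; every other step is either a textbook computation (Jacobian of the cone parametrization, the two beta-type integrals) or a direct application of convexity.
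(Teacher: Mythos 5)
Your proof is correct, and while its computational core coincides with the paper's, the way you close the argument is genuinely different. The cone parametrization over the facets with the $t^{d-1}$ Jacobian and the interpolation bound $f(p+t(x-p))\le(1-t)f(p)+tf(x)$ is exactly the paper's Lemma~\ref{lem:polar} specialized to the case $h(\Omega,\cdot)\equiv r$; but the paper subtracts a supporting affine function $g$ of $f$ at the incenter, applies the lemma to the nonnegative function $f-g$ (which kills the apex term), and uses the Jensen-candidate hypothesis only through the identity $\frac{1}{|\Omega|}\int_\Omega g\,dV=\frac{1}{|\partial\Omega|}\int_{\partial\Omega}g\,d\sigma$. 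You instead retain the apex term, so the radial estimate reduces \eqref{eq:main} to $f(p)\le\frac{1}{|\partial\Omega|}\int_{\partial\Omega}f\,d\sigma$, and you invoke the Jensen-candidate hypothesis through the centroid identity $\bar p_\Omega=\frac{1}{d+1}\bigl(p+d\,\bar p_{\partial\Omega}\bigr)$ to force the incenter $p$ to coincide with the centroid of $\partial\Omega$, after which classical Jensen (via a supporting hyperplane at the interior point $p$) finishes. Both are legitimate; your route yields the nice structural fact that for a polytope with an inscribed ball the Jensen-candidate condition pins down incenter $=$ boundary centroid $=$ body centroid, whereas the paper's subtraction trick is precisely what lets the argument survive when $h(\Omega,\cdot)$ is not constant, giving the more general Theorem~\ref{thm:main} with threshold $(d+1)|\Omega|/|\partial\Omega|$ (your relation $|\Omega|=r|\partial\Omega|/d$ and your centroid identity both rely on the inscribed ball, so your closing does not extend directly to that setting). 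The individual computations you flag --- $|P_F|=r|F|/d$, the two beta-type integrals, and the pyramid-centroid formula $\frac{p+d\bar F}{d+1}$ --- all check out.
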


Shortly after the present article first appeared as a preprint, Nazarov gave a beautiful resolution of Pasteczka's conjecture in an unpublished comment on MathOverflow \cite{fedja}.  In particular, he proved that the conjecture holds in two dimensions, and he provided a counterexample for three and more dimensions.  This counterexample involves a domain $\Omega$ that is very long and skinny.

We mention that other higher-dimensional analogues of the Hermite--Hadamard Inequality have received substantial attention.  Several authors \cite{stefan1,BBHLLSS,larson} have established the inequality \eqref{eq:main} up to a constant factor if $f$ is nonnegative (even without the assumption that $\Omega$ is a Jensen candidate). Similar questions have been explored for general subharmonic functions $f$ \cite{MN,NP,stefan1,LS}.  Another popular direction of inquiry ((e.g., \cite{CC,CCE, chen, bessenyei})), motivated by connections with Choquet theory, has concerned inequalities similar to \eqref{eq:main} but with $d\sigma$ on the right-hand side replaced by a non-uniform measure on $\partial \Omega$.  See \cite{DP} for further references to older work.

The purpose of the present note is to establish Pasteczka's conjecture for a relatively large class of domains, including all domains that are ``nearly spherical''.  This result can be understood as complementing Nazarov's counterexample: Pasteczka's conjecture holds for domains that are not very oblong but can fail for domains that are very oblong.

\begin{figure}[h]
    \centering
\begin{tikzpicture}[scale=1.5,
  dot/.style={draw,fill,circle,inner sep=1pt},
  tangent/.style={
        decoration={
            markings,% switch on markings
            mark=
                at position #1
                with
                {
                    \coordinate (tangent point-\pgfkeysvalueof{/pgf/decoration/mark info/sequence number}) at (0pt,0pt);
                    \coordinate (tangent unit vector-\pgfkeysvalueof{/pgf/decoration/mark info/sequence number}) at (1,0pt);
                    \coordinate (tangent orthogonal unit vector-\pgfkeysvalueof{/pgf/decoration/mark info/sequence number}) at (0pt,1);
                }
        },
        postaction=decorate
    },
    use tangent/.style={
        shift=(tangent point-#1),
        x=(tangent unit vector-#1),
        y=(tangent orthogonal unit vector-#1)
    },
    use tangent/.default=1
  ]
  \def\a{4} % major half axis
  \def\b{2} % minor half axis
  \draw[thick, tangent=0.35, scale = 0.72] (0,0) ellipse ({\a} and {\b});
  \coordinate (o) at (0,0);
  \draw[use tangent] (2.5,0) coordinate (x) -- (-3,0) coordinate (y);
  \coordinate (foot) at ($(x)!(o)!(y)$);
  \draw (o) -- (foot);
  \draw[use tangent] (foot) --++(0.25,0) --++ (0,0.25) --++(-0.25,0);
  
  \fill (tangent point-1) circle [radius=2pt] node[above left] {$x$};
  \fill[black] (o) circle [radius=2pt] node [left] {0};

  \draw (0.72*-4,0) node[right] {$\Omega$};
  \draw [use tangent, decorate,
    decoration = {brace,mirror}] ($(o) + (-0.15,-0.05)$) 
    						--  ($(foot) + (-0.15,0.05)$)
    						node [pos=0.5, right] {$h(\Omega,x)$};

\end{tikzpicture}
    \caption{The length $h(\Omega, x)$.}
    \label{fig:h}
\end{figure}

Before we can state our results, we must set up one piece of notation.  Let $\Omega \subseteq \mathbb{R}^d$ ($d \geq 2$) be a Jensen candidate such that $\partial \Omega$ is piecewise differentiable and $0$ lies in the interior of $\Omega$.  At each differentiable point $x \in \partial \Omega$, define $h(\Omega,x)$ to be the (orthogonal) distance from the origin to the tangent hyperplane to $\partial \Omega$ at $x$; this is the \emph{cone volume measure} on $\partial \Omega$ with respect to the origin.  Define $h_{\max}(\Omega)$ to be the supremum of $h(\Omega,x)$ over all differentiable points $x \in \partial \Omega$.  See Figure~\ref{fig:h}.  Notice that different translates of $\Omega$ can have different values of $h_{\max}$.  The following is our main result.

\begin{theorem}\label{thm:main}
Let $\Omega \subseteq \mathbb{R}^d$ be a Jensen candidate such that $\partial \Omega$ is piecewise differentiable and $0$ lies in the interior of $\Omega$.  If $$h_{\max}(\Omega) \leq \frac{(d+1)|\Omega|}{|\partial \Omega|},$$ then $\Omega$ is a Jensen domain.
\end{theorem}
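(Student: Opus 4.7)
The plan is to decompose $\Omega$ into ``cones'' with apex at the origin, exploiting that $\Omega$ is star-shaped with respect to $0$. The map $(t,y) \mapsto ty$ from $[0,1] \times \partial \Omega$ to $\Omega$ is a bijection almost everywhere, and a standard Jacobian computation (with $h(\Omega,y)$ appearing as $y\cdot n(y)$) yields the identity
\begin{equation*}
\int_\Omega f \, dV \;=\; \int_{\partial \Omega} h(\Omega, y) \int_0^1 t^{d-1} f(ty) \, dt \, d\sigma(y).
\end{equation*}
Specializing this will give me the two identities I need later: taking $f\equiv 1$ yields $|\Omega| = \tfrac{1}{d}\int_{\partial\Omega} h(\Omega,y)\,d\sigma(y)$, and taking $f(x)=x$ componentwise yields $\int_{\partial\Omega} h(\Omega,y)\,y\, d\sigma(y) = (d+1)\,|\Omega|\,\bar x_\Omega$, where $\bar x_\Omega$ denotes the centroid of $\Omega$.

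Next I will insert the one-dimensional convex bound $f(ty) \le (1-t) f(0) + t f(y)$ into the cone formula. Using $\int_0^1 t^{d-1}(1-t)\,dt = \tfrac{1}{d(d+1)}$ and $\int_0^1 t^d\,dt = \tfrac{1}{d+1}$, this reduces to
\begin{equation*}
\int_\Omega f\,dV \;\le\; \frac{|\Omega|}{d+1}\,f(0) \;+\; \frac{1}{d+1}\int_{\partial\Omega} h(\Omega,y)\,f(y)\,d\sigma(y).
\end{equation*}
Setting $A := (d+1)|\Omega|/|\partial \Omega|$, the target inequality \eqref{eq:main} rearranges after clearing denominators to
\begin{equation*}
\int_{\partial \Omega} \bigl(A - h(\Omega, y)\bigr) f(y)\, d\sigma(y) \;\ge\; |\Omega|\,f(0),
\end{equation*}
so it suffices to establish this reformulation.

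The hypothesis $h_{\max}(\Omega) \le A$ is exactly what makes $A - h \ge 0$ pointwise on $\partial \Omega$, and the first cone identity gives $\int_{\partial\Omega} (A - h)\,d\sigma = A\,|\partial\Omega| - d\,|\Omega| = |\Omega|$. Consequently $\mu := |\Omega|^{-1} (A - h)\, d\sigma$ is a probability measure supported on $\partial \Omega$, and the desired reformulation becomes the clean statement $\int f\,d\mu \ge f(0)$. By Jensen's inequality applied to the convex function $f$, this will hold provided the $\mu$-barycenter equals $0$. Computing that barycenter with the two cone identities,
\begin{equation*}
\int_{\partial \Omega} (A - h(\Omega,y))\,y\, d\sigma(y) \;=\; A\,|\partial \Omega|\,\bar x_{\partial \Omega} - (d+1)|\Omega|\,\bar x_\Omega \;=\; (d+1)|\Omega|\bigl(\bar x_{\partial \Omega} - \bar x_\Omega\bigr),
\end{equation*}
which vanishes precisely because $\Omega$ is a Jensen candidate. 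The conceptual crux is recognizing that the hypothesis on $h_{\max}$ exists solely to make $A-h$ a valid nonnegative weight, after which the Jensen-candidate condition reappears in the guise of the barycenter condition for the Jensen's-inequality step. The main technical nuisance I anticipate is justifying the cone decomposition in the piecewise-differentiable setting, but this should be routine since the non-differentiable locus on $\partial \Omega$ has measure zero.
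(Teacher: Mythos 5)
Your proposal is correct. It runs on the same engine as the paper's argument---the cone (polar) decomposition of $\Omega$ with volume element $t^{d-1}h(\Omega,y)\,d\sigma(y)\,dt$ and the one-dimensional convexity bound $f(ty)\le(1-t)f(0)+tf(y)$ along rays from the origin---but the bookkeeping is genuinely different. The paper first subtracts a supporting affine function at $0$, reducing to a nonnegative convex $\tilde{f}$ with $\tilde{f}(0)=0$, bounds $h$ crudely by $h_{\max}$ in Lemma~\ref{lem:polar}, and then invokes the Jensen-candidate hypothesis to dispose of the affine part. You instead keep $h(\Omega,y)$ pointwise, extract the exact cone identities $d|\Omega|=\int_{\partial\Omega}h\,d\sigma$ and $\int_{\partial\Omega}h\,y\,d\sigma=(d+1)|\Omega|\,\bar x_\Omega$, and recast the goal as the statement that $\mu:=|\Omega|^{-1}\bigl(A-h\bigr)\,d\sigma$ (with $A=(d+1)|\Omega|/|\partial\Omega|$) is a probability measure on $\partial\Omega$ with barycenter $0$, so that $\int f\,d\mu\ge f(0)$ is Jensen's inequality; note that proving Jensen's inequality at the interior point $0$ uses precisely the supporting-hyperplane subtraction the paper performs explicitly, so the two proofs are ultimately the same estimates rearranged. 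What your packaging buys: the hypothesis $h_{\max}\le A$ enters only to make the weight $A-h$ nonnegative, you avoid replacing $h$ by $h_{\max}$, and what you actually establish is the cleaner Choquet-flavored criterion $\int_{\partial\Omega}(A-h)f\,d\sigma\ge|\Omega|\,f(0)$ for all convex $f$, which sits naturally alongside the non-uniform-boundary-measure results cited in the introduction; what the paper's version buys is a modular lemma whose equality analysis immediately yields Theorem~\ref{thm:equality}. The only step needing care is the change of variables in the piecewise-differentiable setting, which you correctly flag and which the paper likewise treats as a routine Jacobian computation.
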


The piecewise differentiability condition on $\partial \Omega$ can probably be relaxed.  As written, though, it is mild enough that we can recover Pasteczka's Theorem~\ref{thm:pasteczka} by noting that if $\Omega \subseteq \mathbb{R}^d$ is a convex polytope with an inscribed ball centered at the origin, then $h_{\max}(\Omega)=d|\Omega|/|\partial \Omega|$ (in fact $h(\Omega, \cdot)$ equals this quantity at all differentiable points).

Likewise, when $\Omega$ is a $d$-dimensional ball centered at the origin, we have $h_{\max}(\Omega)=d|\Omega|/|\partial \Omega|$.  Notice that the quantity $h_{\max}(\Omega)$ varies continuously as $\Omega$ is deformed.  It follows that the inequality $h_{\max}(\Omega) \leq (d+1)|\Omega|/|\partial \Omega|$ also holds if $\Omega$ is a perturbation of a ball centered at the origin, i.e., $\Omega$ is ``nearly spherical'', or ``not too oblong''.  The following corollary gives one way of making this observation precise.

\begin{corollary}\label{cor:spherical-shell}
Let $\Omega \subseteq \mathbb{R}^d$ ($d \geq 2$) be a Jensen candidate such that $\partial \Omega$ is piecewise differentiable and $0$ lies in the interior of $\Omega$.  If $\partial \Omega$ is contained in the spherical shell $\{x: 1 \leq |x| \leq (1+1/d)^{1/d}\}$, then $\Omega$ is a Jensen domain.
\end{corollary}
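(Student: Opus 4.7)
The plan is to deduce Corollary~\ref{cor:spherical-shell} from Theorem~\ref{thm:main} by verifying the hypothesis $h_{\max}(\Omega) \leq (d+1)|\Omega|/|\partial \Omega|$ whenever $\partial \Omega$ is sandwiched between the spheres of radii $1$ and $R := (1+1/d)^{1/d}$. All of the work goes into upper-bounding $h_{\max}(\Omega)$ and lower-bounding $|\Omega|/|\partial\Omega|$, and then checking a clean numerical inequality.

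First I will bound $|\Omega|/|\partial\Omega|$ from below. Since $\partial\Omega$ lies outside the open unit ball and $0$ is an interior point of $\Omega$, convexity forces $B(0,1) \subseteq \Omega$: otherwise the segment from $0$ to any missed point of $B(0,1)$ would exit $\Omega$ through a boundary point of norm less than $1$. Consequently, at every differentiable boundary point $x$ the tangent hyperplane does not cut into the unit ball, so $h(\Omega,x) \geq 1$. Integrating the standard cone volume identity
\[
|\Omega| \;=\; \frac{1}{d}\int_{\partial\Omega} h(\Omega,x)\,d\sigma(x)
\]
(obtained by applying the divergence theorem to the radial vector field) then gives $|\Omega|/|\partial\Omega| \geq 1/d$, and therefore $(d+1)|\Omega|/|\partial\Omega| \geq (d+1)/d = 1 + 1/d$.

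Next I will bound $h_{\max}(\Omega)$ from above. The tangent hyperplane at a differentiable boundary point $x$ passes through $x$, so its distance from the origin satisfies $h(\Omega,x) \leq |x|$. The hypothesis $\partial\Omega \subseteq \{|x|\leq R\}$ gives $h_{\max}(\Omega) \leq R = (1+1/d)^{1/d}$. Since the exponent $1/d$ lies in $(0,1]$ and the base $1 + 1/d$ is greater than $1$, we have $(1+1/d)^{1/d} \leq 1 + 1/d$, and combining this with the previous paragraph yields
\[
h_{\max}(\Omega) \;\leq\; 1 + \tfrac{1}{d} \;\leq\; \frac{(d+1)|\Omega|}{|\partial\Omega|},
\]
so Theorem~\ref{thm:main} applies and $\Omega$ is a Jensen domain.

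There is no real obstacle here; the only mildly subtle point is recognizing that the constant $(1+1/d)^{1/d}$ in the statement was chosen exactly so that the product of the trivial bounds $h_{\max}(\Omega) \leq \max_{x\in\partial\Omega}|x|$ and $|\Omega|/|\partial\Omega| \geq 1/d$ balances Theorem~\ref{thm:main}'s threshold. Everything else is elementary once one invokes the cone volume formula and the inclusion $B(0,1)\subseteq \Omega$.
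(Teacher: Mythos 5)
Your proof is correct, but it diverges from the paper's argument in the key step of lower-bounding $|\Omega|/|\partial\Omega|$. The paper bounds the two quantities separately against balls: $|\Omega| \geq \omega_d$ (since $B(0,1)\subseteq\Omega$) and $|\partial\Omega| \leq \lambda^{d-1} d\omega_d$ with $\lambda=(1+1/d)^{1/d}$, the latter via the fact that the nearest-point projection onto $\Omega$ is distance-non-increasing and $\Omega$ sits inside the ball of radius $\lambda$; the constant $\lambda$ is chosen precisely so that $\frac{(d+1)\omega_d}{\lambda^{d-1}d\omega_d}=\lambda$, which is then compared with the same trivial bound $h_{\max}(\Omega)\leq\lambda$ that you use. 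You instead invoke the cone-volume identity $|\Omega|=\frac{1}{d}\int_{\partial\Omega}h(\Omega,x)\,d\sigma(x)$ (which is exactly the paper's Jacobian computation from Lemma~\ref{lem:polar} applied to the constant function) together with $h(\Omega,x)\geq 1$, which follows from $B(0,1)\subseteq\Omega$ since the tangent hyperplane at a differentiable point supports $\Omega$ and so cannot cut into the unit ball. This yields $|\Omega|/|\partial\Omega|\geq 1/d$, which is strictly stronger than the paper's bound $1/(d\lambda^{d-1})$, and it lets you avoid the surface-area monotonicity argument entirely. In fact your route shows that the hypothesis $\partial\Omega\subseteq\{1\leq|x|\leq 1+1/d\}$ already suffices, a slight improvement of the constant $(1+1/d)^{1/d}$, consistent with the paper's remark that the constant is not optimized; the elementary inequality $(1+1/d)^{1/d}\leq 1+1/d$ then closes the gap with the stated hypothesis. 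All individual steps (the inclusion $B(0,1)\subseteq\Omega$, the bound $h(\Omega,x)\leq|x|$, and the cone-volume identity under the piecewise differentiability assumption) are justified, so the argument stands.
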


(The constant $(1+1/d)^{1/d}$ can be improved somewhat, but we have not attempted to optimize it in our argument.)

When $h_{\max}(\Omega)<(d+1)|\Omega|/|\partial \Omega|$, we can characterize the equality cases of \eqref{eq:main}.

\begin{theorem}\label{thm:equality}
Let $\Omega \subseteq \mathbb{R}^d$ ($d \geq 2$) be a Jensen candidate such that $\partial \Omega$ is piecewise differentiable, $0$ lies in the interior of $\Omega$, and $$h_{\max}(\Omega) < \frac{(d+1)|\Omega|}{|\partial \Omega|}.$$  Let $f: \Omega \to \mathbb{R}$ be a continuous convex function.
Then equality holds in \eqref{eq:main} if and only if $f$ is an affine-linear function.
\end{theorem}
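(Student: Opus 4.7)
The direction ``$f$ affine-linear implies equality in \eqref{eq:main}'' is immediate: both sides then equal the value of $f$ at the common center of mass of $\Omega$ and $\partial\Omega$, which exists because $\Omega$ is a Jensen candidate. The work is in the converse, and the plan is to revisit the proof of Theorem~\ref{thm:main}, pinpoint the step where convexity of $f$ is actually used, and show that under the strict hypothesis $h_{\max}(\Omega)<(d+1)|\Omega|/|\partial\Omega|$, equality in \eqref{eq:main} forces that step to be an equality on a set large enough to pin $f$ down to an affine function on all of $\Omega$.

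After translating so that the common center of mass sits at the origin, I expect the proof of Theorem~\ref{thm:main} to factor into two ingredients. First, polar coordinates from the origin combined with the divergence-theorem identity $\int_{\partial\Omega}h(\Omega,x)\,d\sigma=d|\Omega|$ and the chordal bound $f(tx)\le tf(x)+(1-t)f(0)$ yield
\[
\int_\Omega f\,dV\;\le\;\frac{1}{d+1}\int_{\partial\Omega}h(\Omega,x)f(x)\,d\sigma+\frac{|\Omega|}{d+1}f(0).
\]
Second, the measure
\[
d\tilde\mu(x)\;=\;\frac{1}{|\Omega|}\Bigl(\tfrac{(d+1)|\Omega|}{|\partial\Omega|}-h(\Omega,x)\Bigr)\,d\sigma(x)
\]
is a probability measure on $\partial\Omega$: its total mass is $1$ by the same divergence identity, it is nonnegative by the $h_{\max}$ hypothesis, and its barycenter is $0$ because $\int_{\partial\Omega}x\,h(\Omega,x)\,d\sigma=(d+1)\int_\Omega x\,dV=0$ (the first equality is the divergence theorem applied componentwise to $x\mapsto x_i x$, and the second is the Jensen candidate condition). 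Jensen's inequality then gives $f(0)\le\int_{\partial\Omega}f\,d\tilde\mu$; chasing the algebra, this combines with the first ingredient to produce the desired inequality $\int_\Omega f\,dV\le\frac{|\Omega|}{|\partial\Omega|}\int_{\partial\Omega}f\,d\sigma$.

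For the equality characterization, the key observation is that the strict hypothesis makes the Radon--Nikodym density of $\tilde\mu$ with respect to $d\sigma$ strictly positive at every differentiable boundary point, so $\operatorname{supp}(\tilde\mu)=\partial\Omega$. Equality in \eqref{eq:main} forces equality in the Jensen step $f(0)\le\int f\,d\tilde\mu$, whose equality case produces an affine function $\ell\le f$ on $\Omega$ with $\ell=f$ on $\operatorname{supp}(\tilde\mu)=\partial\Omega$; since every interior point of $\Omega$ lies on a chord with both endpoints in $\partial\Omega$, convexity of $f$ along that chord gives $f\le\ell$ on $\Omega$, and hence $f=\ell$ is affine. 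The main obstacle I foresee is matching this template to the authors' actual proof of Theorem~\ref{thm:main}: if they use a different decomposition (say pyramids over each face, or a rearrangement/symmetrization), the strict-slack condition should still produce a positive-weight region on which $f$ must be affine, and convexity then spreads that affineness across all of $\Omega$, but one would need to track the equality conditions carefully through each step of whatever argument is in place.
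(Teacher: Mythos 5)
Your argument is essentially correct and does prove the theorem, but it is organized differently from the paper's. The paper's proof of Theorem~\ref{thm:main} subtracts a supporting affine function $g$ of $f$ at $0$ and applies Lemma~\ref{lem:polar} to $\tilde f=f-g$, bounding the cone-volume density $h(\Omega,x)$ crudely by $h_{\max}(\Omega)$; for Theorem~\ref{thm:equality}, the strict hypothesis then forces $\int_{\partial\Omega}\tilde f\,d\sigma=0$, so $\tilde f\equiv 0$ on $\partial\Omega$ by continuity and nonnegativity, and convexity spreads this to all of $\Omega$. You instead keep the pointwise density $h(\Omega,x)$, package the slack $\tfrac{(d+1)|\Omega|}{|\partial\Omega|}-h(\Omega,x)$ into a probability measure $\tilde\mu$ on $\partial\Omega$ with barycenter $0$, and run Jensen's inequality $f(0)\le\int f\,d\tilde\mu$; the equality case of Jensen plus the full support of $\tilde\mu$ (strict hypothesis) gives $f=\ell$ on $\partial\Omega$ for a supporting affine function $\ell$, and the same convexity-spreading step finishes. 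The two proofs are close cousins---your two ingredients are the paper's Lemma~\ref{lem:polar} computation and the Jensen-candidate condition, repackaged---but yours retains pointwise information about $h$ rather than only $h_{\max}$, and the equality analyses end identically.

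One step as written is flawed, though easily repaired: you begin by ``translating so that the common center of mass sits at the origin.'' The quantity $h_{\max}(\Omega)$ is not translation-invariant (the paper notes this explicitly), and the hypothesis $h_{\max}(\Omega)<(d+1)|\Omega|/|\partial\Omega|$ is stated for the given origin, which is an arbitrary interior point and need not be the center of mass; after your translation, the nonnegativity and strict positivity of the density of $\tilde\mu$ are no longer justified, so as written you only prove the theorem when the bound holds with $h$ measured from the center of mass. Fortunately the translation is unnecessary: without it, the divergence identity $\int_{\partial\Omega}x\,h(\Omega,x)\,d\sigma=(d+1)\int_\Omega x\,dV$ together with the Jensen-candidate condition $\frac{1}{|\partial\Omega|}\int_{\partial\Omega}x\,d\sigma=\frac{1}{|\Omega|}\int_\Omega x\,dV$ gives
\[
\int_{\partial\Omega}x\,d\tilde\mu=\frac{d+1}{|\Omega|}\left(\frac{|\Omega|}{|\partial\Omega|}\int_{\partial\Omega}x\,d\sigma-\int_\Omega x\,dV\right)=0,
\]
so the barycenter of $\tilde\mu$ is the original origin (not the center of mass), and the rest of your argument goes through verbatim with $h$ measured from that origin.
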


It remains an open problem to find further conditions that are necessary or sufficient for a domain to be Jensen.

\section{Proofs}\label{sec:proofs}
Our proof of Theorem~\ref{thm:main} follows the same general proof strategy as Pasteczka's proof of Theorem~\ref{thm:pasteczka}.  Pasteczka (essentially) treated the special case where the function $h(\Omega, \cdot)$ is constant and equal to $d|\Omega|/|\partial \Omega|$.  The main idea of the following lemma is to calculate $\int_\Omega f \,dV$ using a spherical coordinate system.

\begin{lemma}\label{lem:polar}
Let $\Omega \subseteq \mathbb{R}^d$ ($d \geq 2$) be a compact convex domain such that $\partial \Omega$ is piecewise differentiable and $0$ lies in the interior of $\Omega$.  If $f:\Omega \to \mathbb{R}$ is a nonnegative convex function such that $f(0)=0$, then
$$\int_\Omega f \,dV \leq \frac{h_{\max}(\Omega)}{d+1} \int_{\partial \Omega} f \,d\sigma.$$
\end{lemma}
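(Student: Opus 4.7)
The plan is to evaluate $\int_\Omega f \, dV$ in spherical coordinates centered at the origin, apply convexity on each radial ray, and then change variables back to a surface integral over $\partial\Omega$. For $u \in S^{d-1}$, let $r(u)$ denote the distance from the origin to $\partial\Omega$ along direction $u$, so
$$\int_\Omega f \, dV = \int_{S^{d-1}} \int_0^{r(u)} f(tu) \, t^{d-1} \, dt \, du.$$

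For the inner radial integral, I would use the fact that convexity of $f$ together with $f(0) = 0$ forces $t \mapsto f(tu)/t$ to be nondecreasing on $(0, r(u)]$ for each fixed $u$. In particular, $f(tu) \leq (t/r(u)) f(r(u)u)$ for $0 \leq t \leq r(u)$, which upon integration gives
$$\int_0^{r(u)} f(tu) \, t^{d-1} \, dt \;\leq\; \frac{f(r(u)u)}{r(u)} \cdot \frac{r(u)^{d+1}}{d+1} \;=\; \frac{r(u)^d}{d+1} f(r(u)u).$$
This step does not actually use $f \geq 0$; nonnegativity will enter at the very end.

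The main geometric computation is the Jacobian of the radial projection $\partial\Omega \to S^{d-1}$. At a differentiable point $x = r(u)u \in \partial\Omega$ with outward unit normal $n(x)$, the standard formula gives $d\sigma = \tfrac{r(u)^{d-1}}{u \cdot n(x)} \, du$; and since the tangent hyperplane at $x$ is $\{y : y \cdot n(x) = x \cdot n(x)\}$, the distance from the origin to it is $h(\Omega, x) = x \cdot n(x) = r(u)\,(u \cdot n(x))$. These combine to the clean identity $r(u)^d \, du = h(\Omega, x) \, d\sigma$, so
$$\int_{S^{d-1}} r(u)^d f(r(u)u) \, du \;=\; \int_{\partial\Omega} h(\Omega, x)\, f(x)\, d\sigma \;\leq\; h_{\max}(\Omega) \int_{\partial\Omega} f \, d\sigma,$$
where the last step uses $f \geq 0$. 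Dividing by $d+1$ yields the claimed inequality.

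The main technical concern is regularity: one must check that the spherical parametrization and the Jacobian identity are valid. Since $\Omega$ is compact and convex with $0$ in its interior, $r$ is a positive Lipschitz function on $S^{d-1}$, and the hypothesis that $\partial\Omega$ is piecewise differentiable ensures that $n(x)$ is defined $\sigma$-almost everywhere. The exceptional set has measure zero on both sides, so all integrals are well-defined and the identity $r(u)^d \, du = h(\Omega, x)\, d\sigma$ holds almost everywhere. This should not pose a serious obstruction to the argument.
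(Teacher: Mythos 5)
Your proposal is correct and follows essentially the same route as the paper: the paper parametrizes $\Omega$ directly by $\partial\Omega \times (0,1]$ with volume element $t^{d-1} h(\Omega,x)\,d\sigma(x)\,dt$, which is exactly your spherical-coordinate computation combined with your identity $r(u)^d\,du = h(\Omega,x)\,d\sigma$, and it uses the same convexity bound $f(tx) \leq t f(x)$ and the same final estimate $h(\Omega,x) \leq h_{\max}(\Omega)$ via nonnegativity of $f$. The only difference is that you factor the change of variables through $S^{d-1}$ explicitly, which is a cosmetic rearrangement of the same Jacobian calculation.
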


\begin{proof}
Notice that $\Omega$ is radially convex with respect to the origin.  Hence, each point of $\Omega$ other than the origin can be uniquely written as $tx$ where $x \in \partial \Omega$ and $t \in (0,1]$.  In this $(x,t)$-coordinate system for $\Omega$, the volume element is $dV=t^{d-1}h(\Omega,x) \, d\sigma(x) \,dt$, where $\sigma$ is the surface measure on $\partial \Omega$; this is a routine Jacobian calculation.  Since $f$ is convex and vanishes at the origin, we have
$$f(x,t) \leq (1-t)f(x,0)+tf(x,1)=tf(x,1).$$
Using Fubini's Theorem, the above inequality, and the nonnegativity of $f$, we get that
\begin{align*}
\int_\Omega f \,dV &= \int_{\partial \Omega} \int_0^1 f(x,t) t^{d-1}h(\Omega,x) \,d\sigma(x) \,dt\\
 &\leq \int_{\partial \Omega} \int_0^1 tf(x,1)t^{d-1}h(\Omega,x) \,d\sigma(x) \,dt\\
  &=\int_{\partial \Omega} f(x,1)h(\Omega,x) \,d\sigma(x) \int_0^1 t^d \,dt\\
  & \leq \frac{h_{\max}(\Omega)}{d+1} \int_{\partial \Omega} f \,d\sigma,
\end{align*}
as desired.
\end{proof}

An alternative perspective on this calculation (suggested by Ramon van Handel) comes from applying the Divergence Theorem to the function $x f(x)$.  With this lemma in hand, we can quickly deduce Theorem~\ref{thm:main}.

\begin{proof}[Proof of Theorem~\ref{thm:main}]
Let $f: \Omega \to \mathbb{R}$ be a convex function.  Since $f$ is convex, there is an affine-linear function $g: \Omega \to \mathbb{R}$ such that $g(0)=f(0)$ and $g(x) \leq f(x)$ for all $x \in \Omega$.  If $f$ is differentiable at the origin, then $g(x)=f(0)+x \cdot \nabla f(x)$ is the equation of the hyperplane tangent to the graph of $f$ at $(0, f(0))$; otherwise $g$ need not be unique.  Now $\tilde{f}:=f-g$ is a nonnegative convex function on $\Omega$ satisfying $\tilde{f}(0)=0$.  Lemma~\ref{lem:polar} tells us that
$$\int_\Omega \tilde{f} \,dV \leq \frac{h_{\max}(\Omega)}{d+1} \int_{\partial \Omega} \tilde{f} \,d\sigma,$$
and hence (by the assumption on $h_{\max(\Omega)}$)
$$\frac{1}{|\Omega|}\int_\Omega \tilde{f} \,dV \leq \frac{1}{|\partial \Omega|} \int_{\partial \Omega} \tilde{f} \,d\sigma.$$
Since $g$ is affine-linear and $\Omega$ is a Jensen candidate, we have
$$\frac{1}{|\Omega|}\int_\Omega g \,dV = \frac{1}{|\partial \Omega|} \int_{\partial \Omega} g\,d\sigma.$$
Adding the preceding two equations gives
$$\frac{1}{|\Omega|}\int_\Omega f \,dV \leq \frac{1}{|\partial \Omega|} \int_{\partial \Omega} f \,d\sigma.$$
Hence, $\Omega$ is a Jensen domain.
\end{proof}

Next, we prove Corollary~\ref{cor:spherical-shell}.

\begin{proof}[Proof of Corollary~\ref{cor:spherical-shell}]
Write $\lambda:=(1+1/d)^{1/d}$.  By Theorem~\ref{thm:main}, it suffices to show that $h_{\max}(\Omega) \leq (d+1)|\Omega|/|\partial \Omega|$.  Let $\omega_d$ denote the volume of the $d$-dimensional unit ball; recall that the surface area of the $d$-dimensional unit ball is $d\omega_d$.  Clearly $|\Omega| \geq \omega_d$.  We also have $|\partial \Omega| \leq \lambda^{d-1}d\omega_d$ since the nearest-point projection to $\Omega$ is distance-non-increasing and $\Omega$ is contained in a ball of radius $\lambda$.  Finally, $h(\Omega,x)$ is everywhere at most $\lambda$.  Putting everything together gives
$$\frac{(d+1)|\Omega|}{|\partial \Omega|} \geq \frac{(d+1)\omega_d}{\lambda^{d-1}d\omega_d}=\lambda \geq h_{\max}(\Omega),$$
as needed.
\end{proof}

We now turn to Theorem~\ref{thm:equality}.  We remark that the continuity assumption on $f$ in the statement of the theorem is very mild since every convex function on $\Omega$ is automatically continuous on the interior of $\Omega$. (Allowing discontinuities on $\partial \Omega$ can artificially increase the right-hand side of \eqref{eq:main}.)

\begin{proof}[Proof of Theorem~\ref{thm:equality}]
It is clear from the discussion in the introduction that equality holds in \eqref{eq:main} for every affine-linear function $f$.  Now, suppose $f$ satisfies \eqref{eq:main} with equality.  Then the inequalities in the proof of Theorem~\ref{thm:main} are all equalities.  In particular, since $h_{\max}(\Omega)<(d+1)|\Omega|/|\partial \Omega|$, comparing the first two inequalities in the proof of Theorem~\ref{thm:main} gives
$$\int_{\partial \Omega} \tilde{f} \,d\sigma=0.$$
Since $\tilde{f}$ is continuous and nonnegative on $\Omega$, we conclude that $\tilde{f}$ is uniformly zero on $\partial \Omega$.  Then $\tilde{f}$ is uniformly zero on all of $\Omega$ by convexity, and $f=g$ is affine-linear, as desired.
\end{proof}

\section*{Acknowledgements}
The first author is supported in part by an NSF Graduate Research Fellowship (grant DGE--2039656).  This research was conducted during the 2023 mathematics REU program at the University of Minnesota, Duluth (supported by Jane Street Capital and the National Security Agency); we thank Joe Gallian for providing this wonderful opportunity.  We thank Stefan Steinerberger for bringing Pasteczka's conjecture to our attention, and we thank him and Ramon van Handel for making valuable comments on an earlier draft of the paper.  Finally, we thank Maya Sankar for helping us make Figure~\ref{fig:h}.

\end{document}